\theoremstyle{plain}
\newtheorem{theorem}{Theorem}[section]
\newtheorem{definition}{Definition}[section]
\newtheorem{lemma}{Lemma}[section]
\newtheorem{corollary}{Corollary}[section]
\newtheorem{example}{Example}[section]
\numberwithin{equation}{section}
\begin{document}
\begin{center} {\bf Legendre theorems for a class of partitions with initial repetitions}\end{center}
\begin{center}
 Darlison Nyirenda$^{1}$
and
Beaullah Mugwangwavari$^{2}$ 
 \vspace{0.5cm} \\
$^{1}$The John Knopfmacher Centre for Applicable Analysis and Number Theory, University of the Witwatersrand, P.O. Wits 2050, Johannesburg, South Africa.\\
$^{2}$ School of Mathematics, University of the Witwatersrand, P. O. Wits 2050, Johannesburg, South Africa.\\
e-mails: darlison.nyirenda@wits.ac.za, 712040@students.wits.ac.za\\

\end{center}
\begin{abstract}
Partitions with initial repetitions were introduced by George Andrews. We consider a subclass of these partitions and find Legendre theorems associated with their respective partition functions. The results in turn provide partition theoretic interpretations of some Rogers-Ramanujan identities due to Lucy J. Slater.
\end{abstract}
\textbf{Key words}: partition; generating function, initial repetition, bjiection.\\
\textbf{MSC 2010}: 11P81, 05A15.   
\section{Introduction}
A partition of $n$ is a non-increasing sequence  of positive integers: $(\lambda_1,\lambda_2, \lambda_3, \cdots, \lambda_s)$ such that $\sum\limits_{i = 1}^{s}\lambda_i  = n$. The summands $\lambda_i$'s are called \textit{parts} and the \textit{length} of a partition is the total number of parts (counting multiplicity). Instead of the \lq vector notation\rq\,, we sometimes use the multiplicity notation $(\mu_1^{m_1},\mu_2^{m_2}, \mu_3^{m_3}, \cdots, \mu_\ell^{m_\ell})$ in which $m_i $ denotes the multiplicity of the part $\mu_i$ and $\mu_1 > \mu_2 > \cdots > \mu_{\ell}$. If $m_i = 1$ for all $i$, we have a partition into distinct parts. \\
The \textit{union} of two partitions $\lambda$ and $\beta$, denoted by $\lambda \cup \beta$, is simply the multiset union  where $\lambda$ and $\beta$ are treated as multisets. For instance, if $\lambda = (9^3, 7^2, 1^3)$ and $\beta = (7^4, 5^3, 4, 1^3)$, then $\lambda \cup \beta = (9^3, 7^6, 5^3, 4, 1^6)$. \\
\noindent For a partition $\lambda = (\mu_1^{m_1},\mu_2^{m_2}, \mu_3^{m_3}, \cdots, \mu_\ell^{m_\ell})$,  the conjugate of $\lambda$ is denoted by $\lambda^{\prime}$  and it is given by:
$$ \lambda^{\prime} = \left( \left( \sum\limits_{i = 1}^{r} m_i \right)^{\mu_{\ell}}, \left(\sum\limits_{i = 1}^{\ell-1} m_i \right)^{\mu_{\ell - 1} - \mu_{\ell}}, \cdots, m_{1}^{\mu_{1} - \mu_{2}}\right).$$
\noindent We shall use upper case letters for sets and lower case for counting functions. If $A(n)$ denotes the set of partitions of $n$ with  a certain property, then $a(n)$ denotes the cardinality of $A(n)$, i.e. $a(n) = |A(n)|$. An element of $A(n)$ shall be referred to as an $a(n)$-partition.\\
We shall use $D(n)$ to denote the set of partitions of $n$ into distinct parts and so by our notation above, $d(n) = |D(n)|$. For instance, $d(5) = 3$ and the $d(5)$-partitions are $(5), (4,1)$ and $(3,2)$.  Let $d_{e}(n)$ (resp. $d_{o}(n)$) be the number of $d(n)$-partitions with even (resp. odd) length. Legendre \cite{legendre} proved that  
\begin{equation}\label{legend}
d_{e}(n) - d_{o}(n)  =
\begin{cases}
(-1)^{j}, & \text{if $n = j(3j \pm 1)/2, j \geq 0$;}\\
0,& \text{otherwise}.
\end{cases}
\end{equation}
a result that later became known as Legendre's theorem.  Note that \eqref{legend} is also known as the pentagonal number theorem because the numbers $j(3j\pm 1)/2, j\in \mathbb{Z}$ are the generalized pentagonal numbers. In fact, the numbers appearing in all our theorems in this paper are generalized polygonal numbers (or multiples thereof). \\
An interesting bijective proof  of \eqref{legend} was given by J. Franklin (see \cite{andrews}). For related work in partition theory, see \cite{Nyirenda1, nyire1, nyire2, nyire3, banda, Nyirenda2, Nyirenda3}.\\
\noindent Fine \cite{fine} studied partitions without gaps. A partition without gaps whose parts are in the set $A$ is one in which every part is in $A$ and  every positive integer that is less than the largest part appears as a part. For instance, for $A = \{1 + 2j: j = 0,  1,2, 3, \ldots \}$, we can talk about partitions into odd parts without gaps. We shall also use the terminology \textit{gap-free partitions} to mean partitions without gaps. If the set $A$ is not explicitly stated, we assume that $A = \{1,2,3,4,5, \ldots \}$.\\
\noindent  Motivated by Fine's work on partitions into odd parts without gaps and an observation that partitions without gaps are in one-to-one correspondence with  partitions into distinct parts, George Andrews \cite{initial} introduced partitions with initial repetitions. His definition is given as follows:
\begin{definition}\label{defwani}
A partition of $n$ with initial $k$-repetitions  is one in which either
\begin{enumerate}
\item[a)] every part appears at most $k - 1$ times \\ or
\item[b)] there is some part $j$ which appears at least $k$ times and every positive integer less than $j$ appears at least $k$ times as a part, and all parts greater than $j$ appear at most $k - 1$ times.
\end{enumerate}
\end{definition}
\noindent For example, the partition $ (10^2, 7, 4^{3}, 3^{3},2^{5}, 1^{4})$ is a partition with initial 3-repetitions.\\\\
Among several results, in the same paper \cite{initial}, Andrews was able to show that if  $f_{e}(m,n)$ (rep. $f_{o}(m,n)$) denotes the number of partitions of $n$ with initial 2-repetitions  with $m$ different parts and an even (resp. odd) number of distinct parts, then
$$
f_{e}(m,n) - f_{o}(m,n) = 
\begin{cases}
(-1)^{j}, & \text{if $m = j, n = j(j + 1)/2, j \geq 0$;}\\
0,& \text{otherwise}.
\end{cases}
$$
\noindent Statements of the type \eqref{legend} are called Legendre theorems or identities of Euler pentagonal type.\\
\noindent In this paper, we find Legendre theorems associated with partitions with initial repetitions. Our first consideration is Andrews' partitions with initial 2-repetitions. We look at subsets of these partitions in Section 2 and derive an interesting identity. In Section 3, we find several Legendre theorems. Consequently, these theorems provide partition-theoretic interpretations of the following identities of Rogers-Ramanujan type due to Slater \cite{slater}:
\begin{equation}\label{eq7}
\prod\limits_{n = 1}^{\infty} (1-q^{n}) \sum\limits_{n = 0}^{\infty} \frac{q^{n(n+1)}}{(q^{2};q^{2})_{n}} = \prod\limits_{n = 1}^{\infty}( 1 - q^{4n})(1 - q^{4n-1})(1 - q^{4n - 3})
\end{equation}
\begin{equation}\label{eq9}
\prod\limits_{n = 1}^{\infty} (1-q^{2n}) \sum\limits_{n = 0}^{\infty} \frac{q^{n(2n+1)}}{(q;q)_{2n+1}} = \prod\limits_{n = 1}^{\infty}( 1 - q^{4n})(1 + q^{4n-1})(1 + q^{4n - 3})
\end{equation}
\begin{equation}\label{eq14}
\prod\limits_{n = 1}^{\infty} (1-q^{n}) \sum\limits_{n = 0}^{\infty} \frac{q^{n(n+1)}}{(q;q)_{n}} = \prod\limits_{n = 1}^{\infty}( 1 - q^{5n})(1 - q^{5n-1})(1 - q^{5n - 4})
\end{equation}
\begin{equation}\label{eq38}
\prod\limits_{n = 1}^{\infty} (1-q^{2n}) \sum\limits_{n = 0}^{\infty} \frac{q^{2n(n+1)}}{(q;q)_{2n+1}} = \prod\limits_{n = 1}^{\infty}( 1 - q^{8n})(1 + q^{8n-1})(1 + q^{8n - 7})
\end{equation}
\begin{equation}\label{eq53}
\prod\limits_{n = 1}^{\infty} (1-q^{4n}) \sum\limits_{n = 0}^{\infty} \frac{q^{4n^{2}}(q;q^{2})_{2n}}{(q^{4};q^{4})_{2n}} = \prod\limits_{n = 1}^{\infty}( 1 - q^{12n})(1 - q^{12n-5})(1 - q^{12n-7})
\end{equation}
\begin{equation}\label{eq55}
\prod\limits_{n = 1}^{\infty} (1-q^{4n}) \sum\limits_{n = 0}^{\infty} \frac{q^{4n(n+1)}(q;q^{2})_{2n+1}}{(q^{4};q^{4})_{2n+1}} = \prod\limits_{n = 1}^{\infty}( 1 - q^{12n})(1 - q^{12n-1})(1 - q^{12n-11})
\end{equation}
\begin{equation}\label{eq57}
\prod\limits_{n = 1}^{\infty} (1-q^{4n}) \sum\limits_{n = 0}^{\infty} \frac{q^{4n(n+1)}(-q;q^{2})_{2n+1}}{(q^{4};q^{4})_{2n+1}} = \prod\limits_{n = 1}^{\infty}( 1 - q^{12n})(1 + q^{12n-1})(1 + q^{12n-11})
\end{equation}
\begin{equation}\label{eq70}
\prod\limits_{n = 1}^{\infty} \frac{(1-q^{2n})}{(1+q^{2n-1})} \sum\limits_{n = 0}^{\infty} \frac{q^{n(n+2)}(-q;q^{2})_{n+1}(-q^{2};q^{4})_{n}}{(q^{2};q^{2})_{2n+1}} = \prod\limits_{n = 1}^{\infty}( 1 - q^{16n})(1 - q^{16n-4})(1 + q^{16n-12})
\end{equation}
Throughout  our discussion, we assume that $|q|<1$ and some of the tools that we use include the following identities:
 \begin{equation}\label{jacobi}
\sum\limits_{n = -\infty}^{\infty}z^{n}q^{n(n+1)/2} = \prod\limits_{n = 1}^{\infty}( 1 - q^{n})(1 + zq^{n})(1 + z^{-1}q^{n - 1}) 
\end{equation}
where  $z \neq 0$  and
\begin{equation}\label{gauss}
\sum_{n = -\infty}^{\infty}(-1)^{n}q^{n^2} = \prod_{n = 1}^{\infty}\frac{1 - q^{n}}{1 + q^{n}}. 
\end{equation}
For reference, see Theorem 11 of \cite{andrewserik} and Corollary 2.10 of \cite{andrews}.
\section{On Andrews' partitions with 2-initial repetitions}
We first record the following result.
\begin{lemma}
For $|q|< 1$, the following factorisations hold:
\begin{equation}\label{factor1}
\sum_{n = 1}^{\infty}\frac{q^{2n^{2} - n}}{(q;q)_{2n - 1}} = (-q;q)_{\infty} \sum_{n = 1}^{\infty}(-1)^{n+1}q^{n^{2}},
\end{equation}
\begin{equation}\label{factor2}
\sum_{n = 0}^{\infty}\frac{q^{2n^{2} + n}}{(q;q)_{2n}} = (-q;q)_{\infty} \sum_{n = 0}^{\infty}(-1)^{n}q^{n^{2}}.
\end{equation}
\end{lemma}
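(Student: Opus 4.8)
The plan is to reduce both identities to one classical expansion of Euler together with Gauss's identity \eqref{gauss}. The starting point is the well-known summation
\begin{equation}\label{eulerexp}
\sum_{k=0}^{\infty}\frac{z^{k}q^{k(k+1)/2}}{(q;q)_{k}} = (-zq;q)_{\infty},\qquad |q|<1,
\end{equation}
which holds for all $z$. The key observation is that the exponent $k(k+1)/2$ equals $2n^{2}-n$ when $k=2n-1$ and equals $2n^{2}+n$ when $k=2n$. Hence the two sums on the left of \eqref{factor1} and \eqref{factor2} are precisely the odd-index and even-index parts of \eqref{eulerexp}. (Combinatorially these are the generating functions for partitions into an odd, respectively even, number of distinct parts, which is a useful sanity check but not needed for the argument.)

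First I would specialise $z=1$ and $z=-1$ in \eqref{eulerexp}, where the right-hand side becomes $(-q;q)_{\infty}$ and $(q;q)_{\infty}$ respectively. Taking the half-difference and the half-sum isolates the odd- and even-indexed terms, giving
\begin{align}
\sum_{n=1}^{\infty}\frac{q^{2n^{2}-n}}{(q;q)_{2n-1}} &= \tfrac{1}{2}\left[(-q;q)_{\infty}-(q;q)_{\infty}\right],\\
\sum_{n=0}^{\infty}\frac{q^{2n^{2}+n}}{(q;q)_{2n}} &= \tfrac{1}{2}\left[(-q;q)_{\infty}+(q;q)_{\infty}\right].
\end{align}
This disposes of the left-hand sides completely, reducing each identity to a statement about the bracketed closed forms.

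Next I would rewrite these closed forms using \eqref{gauss}, which I read as $(q;q)_{\infty}=(-q;q)_{\infty}\sum_{n=-\infty}^{\infty}(-1)^{n}q^{n^{2}}$. Folding the bilateral theta series into a one-sided sum via $\sum_{n=-\infty}^{\infty}(-1)^{n}q^{n^{2}}=2\sum_{n=0}^{\infty}(-1)^{n}q^{n^{2}}-1$ converts the half-sum into $(-q;q)_{\infty}\sum_{n=0}^{\infty}(-1)^{n}q^{n^{2}}$ and the half-difference into $(-q;q)_{\infty}\sum_{n=1}^{\infty}(-1)^{n+1}q^{n^{2}}$, which are exactly the right-hand sides of \eqref{factor2} and \eqref{factor1}.

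All the ingredients—Euler's expansion \eqref{eulerexp}, the evaluations at $z=\pm1$, and \eqref{gauss}—are standard, so I do not expect a genuine obstacle. The only point demanding care is the bookkeeping in the folding step: tracking the lone $n=0$ term and distinguishing the sign $(-1)^{n+1}$ from $(-1)^{n}$, so that the constant terms match on both sides. The real content of the lemma is the initial remark that splitting Euler's expansion according to the parity of the summation index $k$ yields precisely these two generating functions.
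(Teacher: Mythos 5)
Your proof is correct and follows essentially the same route as the paper: the paper identifies the two sums as the generating functions for partitions into an odd, respectively even, number of distinct parts, notes that their sum is $(-q;q)_{\infty}$ and their difference is $(q;q)_{\infty}$, and then applies \eqref{gauss} exactly as you do in the folding step. Your specialisation of Euler's expansion at $z=\pm1$ followed by the half-sum and half-difference is just the analytic packaging of that same linear system, so the two arguments coincide step for step.
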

\begin{proof} 
Since $d(n) =  d_{e}(n) - d_{o}(n) + 2d_{o}(n)$, we have
$$\sum_{n = 0}^{\infty}d(n)q^{n} = \sum_{n = 0}^{\infty}(d_{e}(n) - d_{o}(n))q^{n} + 2\sum_{n = 0}^{\infty}d_{o}(n)q^{n}.$$ 

 \begin{align*}
 2\sum_{n = 0}^{\infty}d_{o}(n)q^{n}   & = (-q;q)_{\infty} - \sum_{n = 0}^{\infty}(d_{e}(n) - d_{o}(n))q^{n}\\
                                            & = (-q;q)_{\infty} -  (q; q)_{\infty}\\
                                            & = (-q;q)_{\infty}\left(1 - \frac{(q;q)_{\infty}}{(-q;q)_{\infty}}\right)\\
                                            & = (-q;q)_{\infty}\left(1 - \sum_{n = -\infty}^{\infty}(-1)^{n}q^{n^{2}}\right)\,\,(\text{by}\,\,\eqref{gauss})\\
                                            & = 2(-q;q)_{\infty}\sum_{n = 1}^{\infty}(-1)^{n+1}q^{n^{2}}.                                                                                                                 
                                            \end{align*}
Thus it is not difficult to see that 
$$\sum_{n = 0}^{\infty}d_{o}(n)q^{n} = \sum_{n = 1}^{\infty}\frac{q^{n(2n - 1)}}{ (q;q)_{2n - 1}}$$
from which \eqref{factor1} follows. For \eqref{factor2}, we have
\begin{align*}
\sum_{n = 0}^{\infty}d_{e}(n)q^{n}  & = \sum_{n = 0}^{\infty}d(n)q^{n} - \sum_{n = 0}^{\infty}d_{o}(n)q^{n}\\
                                       & = (-q;q)_{\infty} + (-q;q)_{\infty}\sum_{n = 1}^{\infty}(-1)^{n}q^{n^{2}}\,\,(\text{by}\,\,\eqref{factor1})\\
                                       & = (-q;q)_{\infty}\sum_{n = 0}^{\infty}(-1)^{n}q^{n^{2}}.
\end{align*}
It can easily be shown that
$$\sum_{n = 0}^{\infty}d_{e}(n)q^{n} = \sum_{n = 0}^{\infty}\frac{q^{n(2n + 1)}}{(q;q)_{2n}}$$
and so  \eqref{factor2} follows.
\end{proof}
We prove the following theorem.
   \begin{theorem}\label{main1}
Let $b^{e}(n)$ be the number of partitions of $n$ with initial 2-repetitions in which  either all parts are distinct or the largest repeated part is even. Similarly, let $b^{o}(n)$ denote the number of partitions of $n$ with initial 2-repetitions in which at least one part is repeated and the largest repeated part is odd. Then
$$b^{e}(n) - b^{o}(n)
= \begin{cases}
1, & \text{if}\,\, n=\frac{j(j+1)}{2}; \\
0\, & \text{otherwise}.
\end{cases} $$
\end{theorem}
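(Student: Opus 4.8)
The plan is to compute the generating function $\sum_{n\ge 0}\bigl(b^{e}(n)-b^{o}(n)\bigr)q^{n}$ by sorting the relevant partitions according to their largest repeated part, and then to collapse the resulting $q$-series to the theta series $\sum_{j\ge 0}q^{j(j+1)/2}$, whose coefficients read off the claimed values. First I would classify a partition with initial $2$-repetitions by its largest repeated part $r$, with the convention $r=0$ for a partition into distinct parts. By the defining property such a partition has $1,2,\dots,r$ each occurring with multiplicity $\ge 2$ while every part exceeding $r$ occurs at most once, and conversely any such configuration has largest repeated part exactly $r$. Hence the generating function for those with largest repeated part $r\ge 1$ is
$$f_{r}(q)=\prod_{i=1}^{r}\frac{q^{2i}}{1-q^{i}}\cdot\prod_{i>r}(1+q^{i})=\frac{q^{r(r+1)}}{(q;q)_{r}}\,(-q^{r+1};q)_{\infty}=\frac{q^{r(r+1)}(-q;q)_{\infty}}{(q;q)_{r}(-q;q)_{r}},$$
and the same formula gives $f_{0}(q)=(-q;q)_{\infty}$. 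Using $(q;q)_{r}(-q;q)_{r}=(q^{2};q^{2})_{r}$ and weighting each class by $(-1)^{r}$ (distinct partitions and even largest repeated part are counted by $b^{e}$, odd by $b^{o}$), I obtain
$$\sum_{n\ge 0}\bigl(b^{e}(n)-b^{o}(n)\bigr)q^{n}=\sum_{r\ge 0}(-1)^{r}f_{r}(q)=(-q;q)_{\infty}\sum_{r\ge 0}\frac{(-1)^{r}q^{r(r+1)}}{(q^{2};q^{2})_{r}}.$$

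Next I would evaluate the inner sum. Setting $Q=q^{2}$, one has $q^{r(r+1)}=Q^{r(r-1)/2}Q^{r}$, so the sum becomes $\sum_{r\ge 0}Q^{\binom{r}{2}}(-Q)^{r}/(Q;Q)_{r}$, which by Euler's identity $\sum_{r\ge 0}Q^{\binom{r}{2}}z^{r}/(Q;Q)_{r}=(-z;Q)_{\infty}$ (taken from \cite{andrews}, with $z=-Q$) collapses to $(Q;Q)_{\infty}=(q^{2};q^{2})_{\infty}$. Thus the entire generating function equals the product $(-q;q)_{\infty}(q^{2};q^{2})_{\infty}$.

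It then remains to identify this product with $\sum_{j\ge 0}q^{j(j+1)/2}$. I would apply the Jacobi triple product \eqref{jacobi} with $z=1$: the symmetry $n\mapsto -n-1$ of the exponent $n(n+1)/2$ folds the bilateral sum into $2\sum_{j\ge 0}q^{j(j+1)/2}$, while the right-hand side becomes $(q;q)_{\infty}(-q;q)_{\infty}(-1;q)_{\infty}=2(q;q)_{\infty}(-q;q)_{\infty}^{2}$. Cancelling the factor $2$ and using $(q;q)_{\infty}(-q;q)_{\infty}=(q^{2};q^{2})_{\infty}$ gives $\sum_{j\ge 0}q^{j(j+1)/2}=(-q;q)_{\infty}(q^{2};q^{2})_{\infty}$, matching the product computed above. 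Comparing coefficients of $q^{n}$ yields $b^{e}(n)-b^{o}(n)=1$ exactly when $n$ is a triangular number $j(j+1)/2$ and $0$ otherwise, as claimed.

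The main obstacle I anticipate lies in the first step rather than the series manipulations. One must verify carefully that the prescription ``$1,\dots,r$ repeated, larger parts distinct'' captures ``largest repeated part equals $r$'' with no omission or double counting, and that the telescoping $(-q^{r+1};q)_{\infty}=(-q;q)_{\infty}/(-q;q)_{r}$ is precisely what produces the clean denominator $(q^{2};q^{2})_{r}$; it is also here that the sign $(-1)^{r}$ must be correctly matched to the even/odd dichotomy defining $b^{e}$ and $b^{o}$. Once the series is in the displayed alternating form, the two evaluations (Euler's identity and \eqref{jacobi}) are routine.
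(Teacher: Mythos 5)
Your argument is correct, and while the opening combinatorial step coincides with the paper's (both classify by the largest repeated part $r$, with parts $1,\dots,r$ occurring at least twice and larger parts distinct, yielding $q^{r(r+1)}(-q;q)_{\infty}/(q^{2};q^{2})_{r}$ for that class), the evaluation of the resulting series is genuinely different. The paper keeps $b^{e}$ and $b^{o}$ as two separate sums indexed by even and odd $r$, rewrites each as $\frac{(q^{2};q^{2})_{\infty}}{(q;q)_{\infty}}\sum \frac{q^{2m(2m\pm 1)}}{(q^{2};q^{2})_{2m\mp 0}}$, and then invokes its preliminary lemma (the factorisations \eqref{factor1} and \eqref{factor2}, themselves derived from Legendre's theorem and Gauss's identity \eqref{gauss}) with $q\mapsto q^{2}$ to reduce the difference to a bilateral theta series, finishing with \eqref{gauss} and \eqref{jacobi} in base $q^{4}$. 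You instead keep a single alternating sum $\sum_{r\ge 0}(-1)^{r}q^{r(r+1)}/(q^{2};q^{2})_{r}$ and collapse it in one stroke via Euler's identity $\sum_{r}Q^{\binom{r}{2}}z^{r}/(Q;Q)_{r}=(-z;Q)_{\infty}$ at $z=-Q$, $Q=q^{2}$, arriving at $(-q;q)_{\infty}(q^{2};q^{2})_{\infty}$ directly; the final identification with $\sum_{j\ge 0}q^{j(j+1)/2}$ via \eqref{jacobi} at $z=1$ (folding $n\mapsto -n-1$) is the classical Gauss triangular-number identity. Your route is shorter and bypasses the lemma entirely, at the cost of not exhibiting the intermediate factorisations \eqref{factor1}--\eqref{factor2} that the paper records as results of independent interest; the paper's route makes the even/odd split explicit, which mirrors the combinatorial dichotomy in the statement. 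All of your series manipulations check out, including the sign matching $(-1)^{r}$ to the $b^{e}/b^{o}$ dichotomy and the telescoping $(-q^{r+1};q)_{\infty}=(-q;q)_{\infty}/(-q;q)_{r}$.
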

\begin{proof}
Note that 
\begin{align*}
\sum_{n = 0}^{\infty}b^{e}(n)q^{n}  & = \prod_{j  = 1}^{\infty}(1 + q^{j}) + \sum_{m = 1}^{\infty}\frac{q^{2(1 + 2 + 3 + \ldots + 2m)}}{(q;q)_{2m}} \prod_{j = 2m + 1}^{\infty}(1 + q^{j}) \\
                                   & =  \sum_{m = 0}^{\infty}\frac{q^{2(1 + 2 + 3 + \ldots + 2m)}}{(q;q)_{2m}} \prod_{j = 2m + 1}^{\infty}(1 + q^{j}) \\
                                   & = \sum_{m = 0}^{\infty}\frac{q^{2m(2m + 1)}}{(q;q)_{2m}} \prod_{j = 2m + 1}^{\infty}\frac{(1 - q^{2j})}{(1 - q^{j})} \\
                                  & =  \sum_{m = 0}^{\infty}\frac{q^{2m(2m + 1)}}{(q;q)_{2m} (q^{2m + 1};q)_{\infty}} \prod_{j = 2m + 1}^{\infty}(1 - q^{2j}) \\
                                   & = \frac{1}{(q;q)_{\infty}}\sum_{m = 0}^{\infty}q^{2m(2m + 1)} \frac{\prod_{j = 1}^{\infty}(1 - q^{2j})}{\prod_{j = 1}^{2m} (1 - q^{2j})}\\
                                   & = \frac{(q^{2};q^{2})_{\infty}}{(q;q)_{\infty}}\sum_{m = 0}^{\infty}\frac{q^{2m(2m + 1)}}{(q^{2};q^{2})_{2m}}
                                   \end{align*}
and 
\begin{align*}
\sum\limits_{n = 0}^{\infty}b^{o}(n)q^{n} & = \sum\limits_{m = 1}^{\infty}\frac{q^{2(1 + 2 + 3 + \ldots + 2m - 1})}{(q;q)_{2m - 1}} \prod_{j = 2m}^{\infty}(1 + q^{j}) \\
                                   &  =  \sum_{m = 1}^{\infty}\frac{q^{2(1 + 2 + 3 + \ldots + 2m - 1)}}{(q;q)_{2m - 1}} \prod_{j = 2m}^{\infty}\frac{1 - q^{2j}}{1 - q^{j}} \\
                                   & =  \sum_{m = 1}^{\infty}\frac{q^{2m(2m -1)}}{(q;q)_{2m - 1} (q^{2m};q)_{\infty}} \prod_{j = 2m}^{\infty}(1 - q^{2j}) \\
                                   & = \frac{1}{{(q;q)_{\infty}}} \sum_{m = 1}^{\infty} q^{2m(2m -1)} \frac{ \prod_{j = 1}^{\infty}(1 - q^{2j}) }{ \prod_{j = 1}^{2m - 1}(1 - q^{2j})}\\
                                   & = \frac{(q^{2};q^{2})_{\infty}}{(q;q)_{\infty}}\sum_{m = 1}^{\infty}\frac{q^{2m(2m -1)}}{(q^{2};q^{2})_{2m - 1}}.
                                   \end{align*}
\noindent Thus
\begin{align*} 
\sum\limits_{n = 0}^{\infty}(b^{e}(n) - b^{o}(n))q^{n} &  = \frac{(q^{2};q^{2})_{\infty}}{(q;q)_{\infty}}\left( \sum_{n = 0}^{\infty}\frac{q^{2n(2n + 1)}}{(q^{2};q^{2})_{2n}} - \sum_{n = 1}^{\infty}\frac{q^{2n(2n - 1)}}{(q^{2};q^{2})_{2n - 1}}\right)\\
                                            & = \frac{(q^{4};q^{4})_{\infty}}{(q;q)_{\infty}}\left( \sum_{n = 0}^{\infty}(-1)^{n}q^{2n^{2}} - \sum_{n = 1}^{\infty}(-1)^{n + 1}q^{2n^{2}}  \right)\\
                                                & \hspace{5mm} \left(\text{by}\,\,\eqref{factor1}\,\,\text{and}\,\, \eqref{factor2}\,\,\text{with $q$ replaced by $q^2$}\right)\\
                                                & = \frac{(q^{4};q^{4})_{\infty}}{(q;q)_{\infty}}\left(\sum_{n = -\infty}^{\infty}(-1)^{n}q^{2n^{2}}  \right)\\
                                                & = \frac{(q^{4};q^{4})_{\infty}}{(q;q)_{\infty}} \frac{(q^{2};q^{2})_{\infty}}{(-q^{2};q^{2})_{\infty}}\,\,(\text{by}\,\, \eqref{gauss} \,\,\text{with $q$ replaced by $q^2$})\\
                                                & =\frac{(q^{4};q^{4})_{\infty} (-q;q)_{\infty} }{(-q^{2};q^{2})_{\infty}}\\
                                            & = (q^{4};q^{4})_{\infty}(-q;q^{4})_{\infty}(-q^{3};q^{4})_{\infty}\\
                                            & = \sum_{n = -\infty}^{\infty}q^{2n^2 + n} \,\,(\text{by}\,\, \eqref{jacobi}\,\,\text{with $q$ replaced by $q^4$ and $z$ replaced by $q^{-1}$})\\
                                            & = \sum_{n = 0}^{\infty}q^{n(n+1)/2}.
\end{align*}
For the last equality, see Equation (1.4.9) of \cite{hirs}.
\end{proof}
\section{Combinatorial proof of Theorem \ref{main1}}
Let $B(n)$ be the set of partitions of $n$ with initial $2$-repetitions. If $\lambda\in B(n)$, write $\lambda=\alpha\cup \beta$ with $\alpha$ a distinct partition and $\beta$ a partition whose parts have even multiplicity. Then $\beta$ is gap-free and $\beta'$ (the conjugate of $\beta$) is a distinct partition with even parts. The goal is to prove that \begin{align*}|\{\lambda=\alpha\cup \beta\in B(n): &\  \ell(\beta') \text{ even}\}|-  |\{\lambda=\alpha\cup \beta\in B(n): \ell(\beta') \text{ odd}\}|\\ & =\begin{cases}1, & \text{ if } n=\frac{j(j+1)}{2}\\ 0 & \text{ otherwise. }\end{cases}\end{align*} We further write $\alpha= (\alpha^o, \alpha^e)$, where $\alpha^o$ (respectively $\alpha^e$) consists of the odd (respectively even) parts of $\alpha$. 
\noindent Since $(\alpha^e, \beta)$ is a pair of distinct partitions with even parts, by doubling all parts in partitions in the proof of \cite[Proposition 2]{BW}, one proves that 
\begin{align*}|\{\lambda& =\alpha^o\cup \alpha^e\cup \beta\in B(n):  \ell(\beta') \text{ even}\}|-  |\{\lambda=\alpha^o\cup \alpha^e\cup \beta\in B(n): \ell(\beta') \text{ odd}\}|\\ & =|\{\lambda=\alpha^o\cup \gamma\in C(n):   \ell(\gamma) \text{ even}\}|-  |\{\lambda=\alpha^o\cup \gamma \in C(n): \ell(\gamma) \text{ odd}\}|, \end{align*} where $C(n)$ is the set of partitions $\mu=\alpha^o\cup \gamma$ of $n$ with $\alpha^o$ a partition into distinct odd pats and $\gamma$ a partition into parts divisible by $4$. 

This shows combinatorially that $$\sum_{n=0}^\infty(b^e(n)-b^o(n))q^n=(q^4;q^4)_\infty(-q;q^2)_\infty= (q^4;q^4)_\infty(-q;q^4)_\infty(-q^3;q^4)_\infty.$$
To finish the combinatorial proof, one uses a combinatorial proof of the Jacobi triple product. For example, one can use the proof in \cite{KK}. Note that \cite{KK} gives a combinatorial proof for  $$(q^4;q^4)_\infty(q;q^4)_\infty(q^3;q^4)_\infty=\sum_{n=0}^\infty (-1)^{T_n}q^{T_n},$$ where $T_n=n(n+1)/2$, but the parity of the number of odd parts in a partition is determined by the parity of the size. 

\section{Related partition functions}
In this section, we explore various partition functions for partitions with initial repetitions.  We give several Legendre theorems  and as a result, partition-theoretic interpretation of equations \eqref{eq7}, \eqref{eq9}, \eqref{eq14}, \eqref{eq38}, \eqref{eq53}, \eqref{eq55}, \eqref{eq57} and \eqref{eq70} are established.\\\\
\noindent Let $c_{1}(n)$ denote the number of partitions of $n$ in which
 either
\begin{enumerate}
\item[(a)] all parts are distinct, the only odd part that may appear is 1 and even parts are at least 8 and divisible by 4  \\
or 
\item[(b)] the largest repeated even part $2j$ appears exactly  4 times, all positive even integers  $< 2j$ appear exactly 4 times, even parts $ > 2j$ are at least $8j + 8$, distinct and divisible by 4, odd parts are distinct and at most $4j + 1$.
\end{enumerate}
\noindent Let $c_{1,e}(n)$ (resp. $c_{1,o}(n)$) denote the number of $c_{1}(n)$-partitions in which the number of distinct even parts is even ( resp. odd). Then we have
\begin{theorem}
For all $n\geq 0$,
$$
c_{1,e}(n) - c_{1,o}(n) =
\begin{cases}
1, & \text{if $n = j(6j + 5), j\in \mathbb{Z}$};\\
0, & \text{otherwise}.
\end{cases}
$$
\end{theorem}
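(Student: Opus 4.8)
The plan is to mirror the proof of Theorem \ref{main1}: assemble the signed generating function $\sum_{n\ge 0}(c_{1,e}(n)-c_{1,o}(n))q^n$ by stratifying the $c_1(n)$-partitions according to their largest repeated part, reduce it to the left-hand side of Slater's identity \eqref{eq57}, and then read off the coefficients by means of the Jacobi triple product \eqref{jacobi}.

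First I would index the partitions by the integer $j$, where $2j$ is the largest repeated part (necessarily even and occurring exactly four times), treating case (a) as the degenerate value $j=0$. For fixed $j\ge 0$ the three ingredients decouple cleanly: the mandatory block $2,4,\dots,2j$, each with multiplicity $4$, contributes $q^{4(2+4+\cdots+2j)}=q^{4j(j+1)}$; the admissible odd parts, being distinct and bounded by $4j+1$, contribute $\prod_{i=0}^{2j}(1+q^{2i+1})=(-q;q^2)_{2j+1}$ and carry no sign; and the admissible large even parts, being distinct, divisible by $4$, and at least $8j+8$, contribute $\prod_{k=2j+2}^{\infty}(1-q^{4k})$, the minus sign in each factor recording the parity of the number of these once-occurring even parts --- exactly the statistic distinguishing $c_{1,e}$ from $c_{1,o}$. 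Summing over $j$ and using $\prod_{k=2j+2}^{\infty}(1-q^{4k})=(q^4;q^4)_\infty/(q^4;q^4)_{2j+1}$ yields
\[
\sum_{n\ge 0}(c_{1,e}(n)-c_{1,o}(n))q^n=(q^4;q^4)_\infty\sum_{j=0}^{\infty}\frac{q^{4j(j+1)}(-q;q^2)_{2j+1}}{(q^4;q^4)_{2j+1}},
\]
which is precisely the left-hand side of \eqref{eq57}.

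With this identification, \eqref{eq57} rewrites the series as $\prod_{n\ge 1}(1-q^{12n})(1+q^{12n-1})(1+q^{12n-11})$. Applying \eqref{jacobi} with $q$ replaced by $q^{12}$ and $z=q^{-1}$ collapses this product to $\sum_{j=-\infty}^{\infty}q^{-j}q^{6j(j+1)}=\sum_{j=-\infty}^{\infty}q^{j(6j+5)}$. Since $j\mapsto j(6j+5)$ is injective on $\mathbb{Z}$ --- the difference factors as $(j-j')\bigl(6(j+j')+5\bigr)$ with an odd, hence nonzero, second factor --- each attainable exponent occurs with coefficient exactly $1$ and every other coefficient vanishes, which is the claimed formula.

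The main obstacle is the bookkeeping in the first step rather than any analytic manipulation: one must verify that the elaborate side conditions in (a)--(b) (the jump forcing large even parts up to $8j+8$, the cap $4j+1$ on odd parts, and the exact multiplicity $4$ on the initial block) assemble into a product with no overlaps and no gaps, and --- crucially --- that only the once-occurring even parts, and not the four-times-repeated block $2,\dots,2j$, contribute to the sign $(-1)^{\#\text{distinct even parts}}$. Once the generating function is correctly built, the remainder is a direct appeal to \eqref{eq57} followed by a routine triple-product evaluation.
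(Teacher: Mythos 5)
Your proposal is correct and follows essentially the same route as the paper: stratify by the largest repeated even part $2j$ to obtain $(q^4;q^4)_\infty\sum_j q^{4j(j+1)}(-q;q^2)_{2j+1}/(q^4;q^4)_{2j+1}$, invoke \eqref{eq57}, and evaluate the resulting product via \eqref{jacobi}. The only difference is that you make explicit the final coefficient extraction (injectivity of $j\mapsto j(6j+5)$), which the paper leaves implicit.
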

\begin{proof}
Since 
$$ \sum\limits_{n = 0}^{\infty}c_{1}(n)q^{n} = \sum_{n = 0}^{\infty}q^{4(2 + 4 + 6 + \ldots + 2n)}(-q;q^{2})_{2n + 1}(-q^{8n+8};q^{4})_{\infty},$$ we must have 
\begin{align*}
\sum\limits_{n = 0}^{\infty}(c_{1,e}(n) - c_{1,o}(n)) q^{n} & = \sum_{n = 0}^{\infty}q^{4(2 + 4 + 6 + \ldots + 2n)}(-q;q^{2})_{2n + 1}(q^{8n+8};q^{4})_{\infty} \\
                                                                                    & =  \sum_{n = 0}^{\infty}q^{4n(n + 1)}(-q;q^{2})_{2n + 1}(q^{4(2n+2)};q^{4})_{\infty} \\
                                                                                   & = \sum_{n = 0}^{\infty}q^{4n(n + 1)}(-q;q^{2})_{2n + 1}\frac{(q^{4};q^{4})_{\infty}}{ (q^{4};q^{4})_{2n + 1}}\\
 & = (q^{4};q^{4})_{\infty}\sum_{n = 0}^{\infty}(-q;q^{2})_{2n + 1} \frac{q^{4n(n + 1)}}{(q^{4};q^{4})_{2n + 1}} \\
& = \prod_{n = 1}^{\infty}(1 + q^{12n - 11})(1 + q^{12n - 1})(1 - q^{12n})\,\,\,\,\,(\text{by}\,\,\,\, \eqref{eq57}) \\
& = \sum_{n = -\infty}^{\infty}q^{n(6n + 5)} \,\,\,\,\,\,(\text{by} \,\,\,\,\eqref{jacobi}) .\\
\end{align*}
\end{proof}
\noindent Let $c_{2}(n)$ be the number of partitions of $n$ in which either
\begin{enumerate}
\item[(a)]  even parts are distinct and 1 is the only odd part that may appear\\
or
\item[(b)] there exists $j \geq 1$ such that an even part $2j$ appears twice, all positive integers $<2j$ appear twice, any even part $> 2j$ is distinct and the largest odd part is at most $2j+1$.\\ 
\end{enumerate}
Furthermore, let  $c_{2,e}(n)$ (resp. $c_{2,o}(n)$) be the number of $c_{2}(n)$-partitions with an even (resp. odd) number of distinct even parts. Then we have the following.
\begin{theorem}
For all $n\geq 0$,
$$c_{2,e}(n) - c_{2,o}(n)=
\begin{cases}
1, & n = 4j^2 + 3j, j \in \mathbb{Z}; \\
0, &\text{otherwise}.
\end{cases}
$$
\end{theorem}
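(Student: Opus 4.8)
The plan is to follow the template established by the two preceding theorems: write down the generating function for $c_2(n)$, insert the sign coming from the parity of the number of distinct even parts, reduce the resulting series to the left-hand side of \eqref{eq38}, and finally extract coefficients via Jacobi's triple product \eqref{jacobi}.

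First I would set up the generating function. Reading the two defining cases together, a $c_2$-partition is governed by a single parameter $j\ge 0$: the even parts $2,4,\dots,2j$ each occur twice (the value $j=0$ being case (a), when no even part is repeated), the odd parts range freely over $\{1,3,\dots,2j+1\}$, and the even parts exceeding $2j$ are distinct. The twice-repeated even block contributes $q^{2(2+4+\cdots+2j)}=q^{2j(j+1)}$, the admissible odd parts contribute $1/(q;q^2)_{j+1}$, and the distinct large even parts contribute $(-q^{2j+2};q^2)_{\infty}$, so that
\[
\sum_{n=0}^{\infty}c_2(n)q^{n}=\sum_{j=0}^{\infty}q^{2j(j+1)}\,\frac{(-q^{2j+2};q^2)_{\infty}}{(q;q^2)_{j+1}}.
\]
Since only the \emph{distinct} even parts carry a sign, passing to $c_{2,e}-c_{2,o}$ replaces each factor $1+q^{2k}$ by $1-q^{2k}$ in the large-even block, turning $(-q^{2j+2};q^2)_{\infty}$ into $(q^{2j+2};q^2)_{\infty}$ while leaving the odd and repeated-even factors untouched.

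The decisive algebraic step is the simplification of the signed series. Writing $(q^{2j+2};q^2)_{\infty}=(q^2;q^2)_{\infty}/(q^2;q^2)_{j}$ and using the identity $(q^2;q^2)_{j}\,(q;q^2)_{j+1}=(q;q)_{2j+1}$ (the even factors $1-q^2,\dots,1-q^{2j}$ together with the odd factors $1-q,\dots,1-q^{2j+1}$ reassemble all factors up through $1-q^{2j+1}$) collapses the sum to
\[
\sum_{n=0}^{\infty}\bigl(c_{2,e}(n)-c_{2,o}(n)\bigr)q^{n}=(q^2;q^2)_{\infty}\sum_{j=0}^{\infty}\frac{q^{2j(j+1)}}{(q;q)_{2j+1}},
\]
which is precisely the left-hand side of \eqref{eq38}.

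To finish, I would invoke \eqref{eq38} to rewrite this as $\prod_{n\ge 1}(1-q^{8n})(1+q^{8n-1})(1+q^{8n-7})$ and then apply \eqref{jacobi} with base $q^{8}$ and $z=q^{-1}$, giving $\sum_{n=-\infty}^{\infty}q^{-n}q^{4n(n+1)}=\sum_{n=-\infty}^{\infty}q^{4n^2+3n}$. Reading off coefficients then yields $c_{2,e}(n)-c_{2,o}(n)=1$ exactly when $n=4j^2+3j$ for some $j\in\mathbb{Z}$, and $0$ otherwise, as claimed. I expect the main obstacle to lie in the first step rather than in the closing manipulations: one must track carefully which parts count as distinct even parts (only those of multiplicity one, exceeding $2j$), verify that the odd parts genuinely range without multiplicity restriction over $\{1,3,\dots,2j+1\}$, and confirm that the repeated-even block contributes the exponent $q^{2j(j+1)}$ rather than a larger one — it is exactly this exponent that must match the summand of \eqref{eq38}.
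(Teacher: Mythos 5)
Your proposal is correct and follows essentially the same route as the paper: the same generating function $\sum_{j\ge 0} q^{2j(j+1)}(-q^{2j+2};q^2)_\infty/(q;q^2)_{j+1}$, the same sign insertion on the distinct even parts, the same collapse to $(q^2;q^2)_\infty\sum_{j\ge 0}q^{2j(j+1)}/(q;q)_{2j+1}$, and the same appeal to \eqref{eq38} followed by \eqref{jacobi}. No substantive differences to report.
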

Since  
\begin{align*}
\sum_{n = 0}^{\infty}c_{2}(n)q^{n}  & = \frac{ (-q^{2};q^{2})_{\infty}}{1 - q} + \sum_{n = 1}^{\infty}\frac{q^{2 + 2 + 4 + 4 + 6 + 6 + \cdots + 2n + 2n} }{(1 - q)(1 - q^{3})\ldots (1 - q^{2n+1})}(-q^{2n + 2};q^{2})_{\infty}\\
                                                         & = \sum_{n = 0}^{\infty}\frac{q^{2 + 2 + 4 + 4 + 6 + 6 + \cdots + 2n + 2n} }{(1 - q)(1 - q^{3})\ldots (1 - q^{2n+1})}(-q^{2n + 2};q^{2})_{\infty},
\end{align*} 
we have
\begin{align*}
\sum_{n = 0}^{\infty}(c_{2,e}(n) - c_{2,o}(n))q^{n}  & =  \sum_{n = 0}^{\infty} \dfrac{q^{2n(n+1)}(q^{2n+2};q^{2})_{\infty}}{(q;q^{2})_{n+1}} \\
& = \sum_{n = 0}^{\infty} \dfrac{q^{2n(n+1)}(q^{2};q^{2})_{\infty}}{(q;q^{2})_{n+1}(q^{2};q^{2})_{n}}  \\
& = (q^{2};q^{2})_{\infty} \sum_{n = 0}^{\infty} \dfrac{q^{2n(n+1)}}{(q;q^{2})_{n+1}(q^{2};q^{2})_{n}} \\
& = (q^{2};q^{2})_{\infty} \sum_{n = 0}^{\infty} \dfrac{q^{2n(n+1)}}{(q;q)_{2n+1}}  \,\,\,(\text{by}\,\,\,\, \eqref{eq38})\\
& = \prod_{n=1}^{\infty} \left(1 + q^{8n-1}\right)\left(1 + q^{8n-7}\right)\left(1-q^{8n}\right) \\
& = \sum_{n = -\infty}^{\infty} q^{4n^{2}+3n}.
\end{align*}
\begin{example}
Consider $n=10$. 
\end{example} 
\noindent The $c_{2}(10)$-partitions are: $$(10), (8,2), (8,1^{2}), (6,4), (6,2^{2}), (6,2,1^{2}), (6,1^{4}), (4,2^{2},1^{2}), (4,2,1^{4}), (4,1^{6}), (3^{2},2^{2}), (3,2^{2},1^{3}),$$ $$ (2^{2},1^{6}), (2,1^{8}), (1^{10})$$ 
From the above, note that $c_{2,e}(10)$-partitions are: $$(8,2), (6,4), (6,2,1^{2}), (4,2,1^4), (3^{2},2^{2}), (3,2^{2},1^{3}),(2^{2},1^{6}), (1^{10})$$ and $c_{2,o}(10)$-partitions are: $(10), (8,1^{2}), (6,2^2), (6,1^{4}), (4,2^2,1^2), (4,1^{6}), (2,1^{8})$. Thus, $$ c_{2,e}(10) - c_{2,o}(10) = 1.$$
This agrees with the theorem because the only integer solution to $4j^2 + 3j = 10$ is $j = -2$\\\\
\noindent Let $c_{3}(n)$ denote the number of partitions of $n$ in which the largest  odd part $2j + 1 (j\geq 0)$ occurs at least $j$ times, even parts are distinct and greater than $2j + 1$. \\
Note that $c_{3}(n)$-partitions are a subset of the set of partitions of $n$ with odd parts below even parts. Partitions with parts separated by parity have received quite a bit of attention lately, see \cite{andrewsparity, darlisonparity}.\\
\noindent Let  $c_{3,e}(n)$ (resp. $c_{3,o}(n)$) be the number of $c_{3}(n)$-partitions with an even (resp. odd) number of even parts. Then:
\begin{theorem}
For all $n\geq 0$,
$$c_{3,e}(n) - c_{3,o}(n) =
\begin{cases}
1, & n = 2j^2 + j, j \in \mathbb{Z}; \\\\
0, &\text{otherwise}.
\end{cases}
$$
\end{theorem}
\begin{proof}
We have
\begin{align*}
\sum_{n = 0}^{\infty}c_{3}(n)q^{n}  & = \sum_{n = 0}^{\infty}\frac{q^{\overbrace{(2n + 1) + (2n + 1) + (2n+ 1) + \ldots + (2n + 1)}^{n\rm \, times}}}{(1 - q)(1 - q^{3})\ldots (1 - q^{2n+1})}(-q^{2n + 2};q^{2})_{\infty}  
\end{align*} 
and thus,
\begin{align*}
\sum_{n = 0}^{\infty}(c_{3,e}(n) - c_{3,o}(n))q^{n}  & = \sum_{n = 0}^{\infty}\frac{q^{\overbrace{(2n + 1) + (2n + 1) + (2n+ 1) + \ldots + (2n + 1)}^{n\rm \, times}}}{(1 - q)(1 - q^{3})\ldots (1 - q^{2n+1})}(q^{2n + 2};q^{2})_{\infty}  \\
                                                                               & = \sum_{n = 0}^{\infty} \dfrac{q^{n(2n+1)}(q^{2n+2};q^{2})_{\infty}}{(q;q^{2})_{n+1}} \\
& = \sum_{n = 0}^{\infty} \dfrac{q^{n(2n+1)}(q^{2};q^{2})_{\infty}}{(q;q^{2})_{n+1}(q^{2};q^{2})_{n}} \\
& = (q^{2};q^{2})_{\infty} \sum_{n = 0}^{\infty} \dfrac{q^{n(2n+1)}}{(q;q)_{2n+1}} \\
 & = \prod_{n = 1}^{\infty}(1 + q^{4n - 1})(1 + q^{4n - 3})(1 - q^{4n}) \,\,\,(\text{by}\,\,\,\, \eqref{eq9}) \\
& = \sum_{n = -\infty}^{\infty} q^{2n^{2} + n}.
\end{align*} 
\end{proof}
\noindent Let $c_{4}(n)$ be the number of partitions of $n$ in which either
\begin{enumerate} 
\item[(a)] all parts are distinct \\
or 
\item[(b)] the largest repeated part $j$ appears twice, all positive integers less than  $j$ appear twice. Note that parts greater than $j$ are distinct.
\end{enumerate}
Similar to the previous formulations, let $c_{4,e}(n)$ (resp.$c_{4,o}(n)$) be the number of $c_{4}(n)$-partitions with an even (resp. odd) number of distinct parts.
\begin{theorem}
For all $n\geq 0$,
$$c_{4,e}(n) - c_{4,o}(n)=
\begin{cases}
(-1)^{j}, & n = (5j^2 + 3j)/2, j \in \mathbb{Z}; \\\\\
0, &\text{otherwise}.
\end{cases}
$$
\end{theorem}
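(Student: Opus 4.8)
The plan is to mirror the strategy of the earlier theorems in this section: first assemble the generating function $\sum_{n\ge 0}c_4(n)q^n$, then encode the signed quantity $c_{4,e}(n)-c_{4,o}(n)$ by replacing the factors that record the singleton parts, and finally identify the resulting series through Slater's identity \eqref{eq14} together with the Jacobi triple product \eqref{jacobi}. Note first that the partitions counted by $c_4(n)$ are exactly Andrews' partitions with initial $2$-repetitions: case (b) says that whenever a part $j$ is repeated, every positive integer below it is repeated, and case (a) is the all-distinct degenerate case.

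First I would build the generating function by classifying a $c_4(n)$-partition according to its largest repeated part. An all-distinct partition (case (a)) contributes $(-q;q)_\infty$. A partition whose largest repeated part equals $j\ge 1$ (case (b)) has $1,2,\dots,j$ each occurring exactly twice, accounting for $q^{2(1+2+\cdots+j)}=q^{j(j+1)}$, while the parts exceeding $j$ are distinct and contribute $(-q^{j+1};q)_\infty$. Since the $j=0$ term reproduces case (a), this gives
\[
\sum_{n\ge 0}c_4(n)q^n=\sum_{j\ge 0}q^{j(j+1)}(-q^{j+1};q)_\infty .
\]

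Next comes the signing step, which is the crux. Following the convention inherited from Andrews and already used for $c_1,c_2,c_3$, the ``number of distinct parts'' counts the parts appearing exactly once; in a case-(b) partition these are precisely the parts larger than $j$, while the doubled parts $1,\dots,j$ are never singletons. Hence the weight $(-1)^{(\text{number of distinct parts})}$ is absorbed by sending each factor $(1+q^i)$ with $i>j$ to $(1-q^i)$, i.e.\ $(-q^{j+1};q)_\infty\mapsto (q^{j+1};q)_\infty$. Writing $(q^{j+1};q)_\infty=(q;q)_\infty/(q;q)_j$ I would obtain
\[
\sum_{n\ge 0}\bigl(c_{4,e}(n)-c_{4,o}(n)\bigr)q^n=(q;q)_\infty\sum_{j\ge 0}\frac{q^{j(j+1)}}{(q;q)_j}.
\]

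Finally I would apply \eqref{eq14} to rewrite the right-hand side as $\prod_{n\ge 1}(1-q^{5n})(1-q^{5n-1})(1-q^{5n-4})$, and then invoke \eqref{jacobi} with $q\mapsto q^5$ and $z=-q^{-1}$, which turns this product into $\sum_{n=-\infty}^{\infty}(-1)^nq^{(5n^2+3n)/2}$; comparing coefficients then yields the stated value. I expect the main obstacle to be the combinatorial bookkeeping in the first two steps—justifying the product $(-q^{j+1};q)_\infty$ for the parts above the largest repeated part, and confirming that the singletons picked up by the sign are exactly those parts—because once the generating function and its signed analogue are correctly in hand, the remaining $q$-series manipulations are entirely routine.
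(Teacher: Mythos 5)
Your proposal is correct and follows essentially the same route as the paper: the same generating function $\sum_{j\ge 0}q^{j(j+1)}(-q^{j+1};q)_\infty$, the same sign-encoding step replacing $(-q^{j+1};q)_\infty$ by $(q^{j+1};q)_\infty=(q;q)_\infty/(q;q)_j$, and the same appeal to \eqref{eq14} followed by \eqref{jacobi}. The only difference is that you spell out the combinatorial justification in more detail than the paper does.
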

\begin{proof}
Clearly, 
$$\sum_{n = 0}^{\infty} c_{4}(n)q^{n}  = \sum_{n = 0}^{\infty} q^{1 + 1 + 2 + 2 + 3 + 3 + \cdots + n + n}(-q^{n+1};q)_{\infty}  = \sum_{n = 0}^{\infty} q^{n(n + 1)}(-q^{n+1};q)_{\infty} $$
so that
\begin{align*}
\sum_{n = 0}^{\infty}(c_{4,e}(n) - c_{4,o}(n))q^{n} & = \sum_{n = 0}^{\infty} q^{n(n+1)}(q^{n+1};q)_{\infty} \\
& = \sum_{n = 0}^{\infty} \dfrac{q^{n(n+1)}(q;q)_{\infty}}{(q;q)_{n}} \\
& = \prod_{n=1}^{\infty} \left(1-q^{5n-1}\right)\left(1-q^{5n-4}\right)\left(1-q^{5n}\right)  \,\,\,(\text{by}\,\,\,\, \eqref{eq14}) \\
& = \sum_{n = -\infty}^{\infty} (-1)^{n} q^{\frac{5n^{2}+3n}{2}}.
\end{align*}
\end{proof}

\begin{example}
Consider $n=7$. 
\end{example}
\noindent The $c_{4}(7)$-partitions are:
$$ (7), (6,1), (5,2), (5,1^{2}), (4,3), (4,2,1), (3,2,1^{2}).$$
$c_{4,e}(7)$-partitions are: $$ (6,1), (5,2), (4,3), (3,2,1^{2})$$
and the $c_{4,o}(7)$-partitions are: $$(7),  (5,1^{2}), (4,2,1).$$
Thus, $c_{4,e}(7) - c_{4,o}(7) = 1$. Indeed, this verifies the theorem as $7 = [5(-2)^2 + 3(-2)]/2$. Note that $j = -2$ is the only integer solution to the equation $7 = (5j^2 + 3j)/2$. \\

\noindent Let $c_{5}(n)$ be the number of partitions of $n$ in which either
\begin{enumerate}
\item[(a)] all parts are distinct  \\
or
\item[(b)]  there exists $j \geq 1$ such that all odd positive integers $\leq j$ appear twice or thrice and other odd parts are distinct, all even positive integers $\leq j$ appear twice, even parts $>2j$ are distinct and no even integer in the interval $(j, 2j]$ appears.
\end{enumerate}
\noindent Let $c_{5,e}(n)$ (resp. $c_{5,o}(n)$)  denote the number of $c_{5}(n)$-partitions with an even (resp. odd) number of distinct even parts.  Then

\begin{align*} \sum_{n = 0}^{\infty} c_{5}(n)q^{n}  & = \sum_{n = 0}^{\infty} q^{1 + 1 + 2 +  2 + \cdots  n + n}(-q^{2n+2};q^{2})_{\infty}(-q;q^{2})_{\infty} \\
                                                                              & = \sum_{n = 0}^{\infty} q^{n(n+1)}(-q^{2n+2};q^{2})_{\infty}(-q;q^{2})_{\infty}
\end{align*} 
and 
$$ \sum_{n = 0}^{\infty}(c_{5,e}(n) - c_{5,o}(n)) q^{n} = \sum_{n = 0}^{\infty} q^{n(n+1)}(q^{2n+2};q^{2})_{\infty}(-q;q^{2})_{\infty}.$$
Observe that
\begin{align*}
\sum_{n = 0}^{\infty}(c_{5,e}(n) - c_{5,o}(n)) (-q)^{n} & = \sum_{n = 0}^{\infty} (-q)^{n(n+1)}((-q)^{2n+2};q^{2})_{\infty}(-(-q);(-q)^{2})_{\infty}\\
& = (q;q^{2})_{\infty} \sum_{n = 0}^{\infty} q^{n(n+1)}(q^{2n+2};q^{2})_{\infty} \\
& = (q;q^{2})_{\infty} \sum_{n = 0}^{\infty} \dfrac{q^{n(n+1)}(q^{2};q^{2})_{\infty}}{(q^{2};q^{2})_{n}} \\
& = (q;q)_{\infty} \sum_{n = 0}^{\infty} \dfrac{q^{n(n+1)}}{(q^{2};q^{2})_{n}} \\
& = \prod_{n=1}^{\infty} \left(1-q^{4n-1}\right)\left(1-q^{4n-3}\right)\left(1-q^{4n}\right)  \,\,\,(\text{by}\,\,\,\, \eqref{eq7}) \\
& = \sum_{n = -\infty}^{\infty}(-1)^{n} q^{2n^{2}+n}.
\end{align*}
We have the following result:
\begin{theorem}\label{partt7}
For all $n\geq 0$,
$$ c_{5,e}(n) - c_{5,o}(n)
= \begin{cases}
1, & n = 2j^{2}+ j, j \in \mathbb{Z};\\\\
0, & \text{otherwise}.
\end{cases}
$$
\end{theorem}

\noindent Let $c_{6}(n)$ be the number of partitions of $n$ in which either
\begin{enumerate}
\item[(a)] all parts are even, distinct and divisible by 4 \\
or
\item[(b)]  the largest repeated part is  $2j  - 1$ (for some $j \geq 1$) and appears exactly 4 times or 5 times, all positive odd integers $< 2j - 1$  appear 4  times or 5 times, any other odd part is distinct and is at most $4j - 1$ in  part size, even parts are $\geq 8j + 4$, distinct and divisible by 4.
\end{enumerate}
\noindent Let $c_{6,e}(n)$ (resp. $c_{6,o}(n)$)  denote the number of $c_{6}(n)$-partitions with an even (resp. odd) number of distinct even parts.  Then
$$ \sum_{n = 0}^{\infty} c_{6}(n)q^{n}  = \sum_{n = 0}^{\infty} q^{4(1 + 3 + 5 + \cdots + 2n - 1)}(-q^{8n+4};q^{4})_{\infty}(-q;q^{2})_{2n}$$ so that
$$ \sum_{n = 0}^{\infty} (c_{6,e}(n) -  c_{6,o}(n))q^{n}  = \sum_{n = 0}^{\infty} q^{4n^{2}}(q^{8n+4};q^{4})_{\infty}(-q;q^{2})_{2n}$$ which implies
\begin{align*}
\sum_{n = 0}^{\infty}(-1)^{n} (c_{6,e}(n) -  c_{6,o}(n))q^{n} & = \sum_{n = 0}^{\infty} q^{4n^{2}}(q^{8n+4};q^{4})_{\infty}(q;q^{2})_{2n} \\
& = \sum_{n = 0}^{\infty} \dfrac{q^{4n^{2}}(q^{4};q^{4})_{\infty}(q;q^{2})_{2n}}{(q^{4};q^{4})_{2n}} \\
& = (q^{4};q^{4})_{\infty} \sum_{n = 0}^{\infty} \dfrac{q^{4n^{2}}(q;q^{2})_{2n}}{(q^{4};q^{4})_{2n}} \\
& = \prod_{n=1}^{\infty} \left(1-q^{12n-5}\right)\left(1-q^{12n-7}\right)\left(1-q^{12n}\right) \,\,\,(\text{by}\,\,\,\, \eqref{eq53}) \\
& = \sum_{n = -\infty}^{\infty} (-1)^{n}q^{6n^{2}+n}.
\end{align*}
Hence, we have:

\begin{theorem}\label{partt7}
For all $n\geq 0$,
$$ c_{6,e}(n) - c_{6,o}(n)
= \begin{cases}
1, & n = 6j^{2}+ j, j \in \mathbb{Z};\\\\
0, & \text{otherwise}. 
\end{cases}
$$
\end{theorem}

\noindent Let $c_{7}(n)$ denote the number of partitions of $n$ in which either
\begin{enumerate} 
\item[(a)] the smallest odd part, if it appears, is at least $3$ and even parts are distinct and at least $4$ \\
or 
\item[(b)] there exists $j \geq 1$ such that $j$ appears three or four times if $j \equiv 2 \pmod{4}$ and  appears exactly three times if $j \not \equiv 2 \pmod{4}$, all positive integers $i < j$   appear exactly twice or thrice if $i \equiv 2 \pmod{4}$ and appear exactly twice if $i \not \equiv 2 \pmod{4}$, an even part greater than $j$ but less than $4j$ is distinct and congruent to $ 2 \pmod{4}$, any other even part is $\geq 4j + 4$ and distinct, odd parts $> j$ are at least $2j + 3$.  
\end{enumerate}
Let $c_{7,e}(n)$ (resp. $c_{7,o}(n)$) be the number of $c_{7}(n)$-partitions in which the number of distinct even parts if $(a)$ holds is even (resp. odd) or the number of distinct even parts that are $\ge 4j+4$  (where $j$ is the largest repeated part with the property that every integer less than $j$ is repeated) is even (resp. odd) if $(b)$ holds. Clearly,
\begin{align*}
\sum_{n = 0}^{\infty} c_{7}(n)q^{n}  & = \frac{(-q^{4};q^{2})_{\infty}}{(q^{3};q^{2})_\infty} + \sum_{n = 1}^{\infty} \dfrac{q^{1 + 1 + 2 + 2 + \cdots + n - 1 + n - 1 + n + n + n}(-q^{2};q^{4})_{n}(-q^{4n+4};q^{2})_{\infty}}{(q^{2n+3};q^{2})_{\infty}} \\
                                                            & = \sum_{n = 0}^{\infty} \dfrac{q^{n(n+2)}(-q^{2};q^{4})_{n}(-q^{4n+4};q^{2})_{\infty}}{(q^{2n+3};q^{2})_{\infty}}
\end{align*}
and thus
\begin{align*}
\sum_{n = 0}^{\infty}( c_{7,e}(n) - c_{7,o}(n)  )q^{n} & = \sum_{n = 0}^{\infty} \dfrac{q^{n(n+2)}(-q^{2};q^{4})_{n}(q^{4n+4};q^{2})_{\infty}}{(q^{2n+3};q^{2})_{\infty}}\\
 & = \sum_{n = 0}^{\infty} \dfrac{q^{n(n+2)}(-q^{2};q^{4})_{n}(q;q^{2})_{n+1}(q^{2};q^{2})_{\infty}}{(q;q^{2})_{\infty}(q^{2};q^{2})_{2n+1}} \\
& = \dfrac{(q^{2};q^{2})_{\infty}}{(q;q^{2})_{\infty}} \sum_{n = 0}^{\infty} \dfrac{q^{n(n+2)}(-q^{2};q^{4})_{n}(q;q^{2})_{n+1}}{(q^{2};q^{2})_{2n+1}} \\
& = \prod_{n=1}^{\infty} \left(1-q^{16n-4}\right)\left(1-q^{16n-12}\right)\left(1-q^{16n}\right)  \,\,\,(\text{by}\,\,\,\, \eqref{eq70}) \\
& = \sum_{n = -\infty}^{\infty} (-1)^{n}q^{8n^{2}+4n}.
\end{align*}

\noindent This leads to the theorem below.

\begin{theorem}
For all $n\geq 0$,
$$c_{7,e}(n) - c_{7,o}(n) =
\begin{cases}
(-1)^{j}, & n = 8j^2 + 4j, j \in \mathbb{Z}; \\\\\
0, &\text{otherwise}.
\end{cases}
$$
\end{theorem}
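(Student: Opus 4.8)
The plan is to mirror the proofs of the preceding theorems: build the generating function for $c_7(n)$ from its combinatorial description, flip the sign carried by each distinct even part to produce the signed generating function $\sum_{n\ge 0}(c_{7,e}(n)-c_{7,o}(n))q^n$, reduce the resulting series to the sum appearing on the left of Slater's identity \eqref{eq70}, and finally pass through \eqref{eq70} and the Jacobi triple product \eqref{jacobi} to a theta series whose coefficients are read off directly.

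First I would translate clauses (a) and (b) into generating-function factors, summing over the largest repeated part, which I take as the index $n$ (so $j=n$). In clause (b) the requirement that each of $1,2,\dots,n$ occur twice with $n$ occurring once more contributes the base weight $q^{1+1+2+2+\cdots+(n-1)+(n-1)+n+n+n}=q^{n(n+2)}$; the optional extra copies of parts $\equiv 2\pmod 4$ together with the distinct parts $\equiv 2\pmod 4$ in the range $(n,4n)$ are encoded by $(-q^{2};q^{4})_{n}$; the distinct even parts $\ge 4n+4$ give $(-q^{4n+4};q^{2})_{\infty}$; and the odd parts beyond the base, which start at $2n+3$, give $1/(q^{2n+3};q^{2})_{\infty}$. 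Checking that $n=0$ reproduces the clause (a) contribution $(q^{4};q^{2})_{\infty}/(q^{3};q^{2})_{\infty}$ then yields the stated formula for $\sum_{n\ge0}c_7(n)q^n$. This combinatorial bookkeeping is where the real work lies: clause (b) carries the most delicate multiplicity conditions of all the $c_i$ (separate rules for parts $\equiv 2\pmod 4$ and for the rest, together with the interlocking thresholds $2j+3$, $4j$, and $4j+4$), so the main obstacle is to verify that each factor matches its clause exactly and that the encoding is a genuine bijection with no double counting.

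Since a distinct even part is exactly an even part of multiplicity one, and these are precisely the parts recorded by $(-q^{4n+4};q^{2})_{\infty}$, replacing that factor by $(q^{4n+4};q^{2})_{\infty}$ weights every partition by $(-1)^{(\text{number of distinct even parts})}$ and hence delivers $\sum_{n\ge0}(c_{7,e}(n)-c_{7,o}(n))q^n$. Standard $q$-Pochhammer manipulations—in particular $(q^{2};q^{2})_\infty/(q^{2};q^{2})_{2n+1}=(q^{4n+4};q^{2})_\infty$ and the conversion of the odd-part product into the finite symbol $(-q;q^{2})_{n+1}$ against the global factor $(-q;q^{2})_\infty$—then recast the summand into exactly the form appearing under the sum in \eqref{eq70}, leaving the prefactor $(q^{2};q^{2})_\infty/(-q;q^{2})_\infty=\prod_{n\ge1}(1-q^{2n})/(1+q^{2n-1})$.

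Applying \eqref{eq70} collapses the series to the triple product $\prod_{n\ge1}(1-q^{16n-4})(1-q^{16n-12})(1-q^{16n})$. A single application of \eqref{jacobi}, with $q$ replaced by $q^{16}$ and $z=-q^{-4}$, recognises this product as the theta series $\sum_{n=-\infty}^{\infty}(-1)^{n}q^{8n^{2}+4n}$. Since the exponents $8j^{2}+4j$ are pairwise distinct as $j$ ranges over $\mathbb{Z}$ (the equation $8j_1^2+4j_1=8j_2^2+4j_2$ forces $j_1+j_2=-\tfrac12$), the coefficient of $q^{N}$ in this series is $(-1)^{j}$ when $N=8j^{2}+4j$ and $0$ otherwise, which is precisely the asserted value of $c_{7,e}(N)-c_{7,o}(N)$, completing the proof.
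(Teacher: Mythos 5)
Your proposal reconstructs the paper's proof essentially verbatim: the same generating function $\sum_{n\ge 0}q^{n(n+2)}(-q^{2};q^{4})_{n}(-q^{4n+4};q^{2})_{\infty}/(q^{2n+3};q^{2})_{\infty}$, the same sign flip on the factor $(-q^{4n+4};q^{2})_{\infty}$, the same reduction to the sum in \eqref{eq70}, and the same passage to a theta series via \eqref{jacobi}. So there is no difference of approach to report.

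There is, however, a genuine gap, and it sits in the one step you describe only in passing: ``the conversion of the odd-part product into the finite symbol $(-q;q^{2})_{n+1}$ against the global factor $(-q;q^{2})_{\infty}$.'' This is not an identity. One has $1/(q^{2n+3};q^{2})_{\infty}=(q;q^{2})_{n+1}/(q;q^{2})_{\infty}$, whereas $(-q;q^{2})_{n+1}/(-q;q^{2})_{\infty}=1/(-q^{2n+3};q^{2})_{\infty}$, and these differ for every $n$. Hence the signed series does not recast into $\frac{(q^{2};q^{2})_{\infty}}{(-q;q^{2})_{\infty}}\sum_{n\ge 0}\frac{q^{n(n+2)}(-q;q^{2})_{n+1}(-q^{2};q^{4})_{n}}{(q^{2};q^{2})_{2n+1}}$ as claimed; a concrete check is the coefficient of $q^{3}$, which equals $2$ in $\sum_{n\ge 0}q^{n(n+2)}(-q^{2};q^{4})_{n}(q^{4n+4};q^{2})_{\infty}/(q^{2n+3};q^{2})_{\infty}$ (one contribution each from $n=0$ and $n=1$) but $0$ in $\sum_{j}(-1)^{j}q^{8j^{2}+4j}$. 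The paper's own proof makes the identical move, so matching it does not rescue the argument; a repair requires either letting the sign also act on the odd parts $\ge 2n+3$, so that the odd-part factor becomes $1/(-q^{2n+3};q^{2})_{\infty}$, or adjusting the definition of $c_{7}$ so that its generating function genuinely equals the left-hand side of \eqref{eq70}. Two smaller points you should not gloss over: the distinct even parts $\equiv 2\pmod 4$ lying in $(j,4j)$ are recorded by $(-q^{2};q^{4})_{n}$, not by $(-q^{4n+4};q^{2})_{\infty}$, so your claim that the latter factor captures ``precisely'' the distinct even parts conflicts with the stated definition of $c_{7}$; and the product on the right of \eqref{eq70} as printed ends in $(1+q^{16n-12})$, which is not of the form \eqref{jacobi} at all --- the product your argument actually needs (and quotes) is $\prod_{n\ge1}(1-q^{16n})(1-q^{16n-4})(1-q^{16n-12})$.
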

\noindent From the theorem, it can be observed that, if $n\neq 8j^2 + 4j$ for all $j$, then $c_{7,e}(n) - c_{7,o}(n)  = 0$ so that 
$$c_{7}(n) \equiv c_{7,e}(n) - c_{7,o}(n)  \equiv 0 \pmod{2}.$$
\noindent We record this result below.
\begin{corollary}
 If $n$ is not four times a triangular number, then $c_{7}(n) \equiv 0 \pmod{2}$.
\end{corollary}
\noindent Let $c_{8}(n)$ be the number of partitions of $n$ in which either
\begin{enumerate}
\item[(a)] even parts are distinct, greater than 6 and divisible by 4 and the only odd part that may appear is 1 and is distinct. \\
or
\item[(b)]  the largest repeated part is  $2j$ (for some $j \geq 1$) and appears exactly 4 times, all positive even integers $< 2j$  appear four times, any even part $> 2j$ is at least $8j + 8$, distinct and divisible by 4, odd parts are distinct and are at most $4j +1$ in  part size.
\end{enumerate}
Let $c_{8,e}(n)$ (resp. $c_{8,o}(n)$)  denote the number of $c_{8}(n)$-partitions with an even (resp. odd) number of distinct even parts.  Then
$$ \sum_{n = 0}^{\infty} c_{8}(n)q^{n}  = \sum_{n = 0}^{\infty} q^{4(2 + 4 + 6 + \cdots + 2n)}(-q^{8n+8};q^{4})_{\infty}(-q;q^{2})_{2n + 1}$$ so that
$$ \sum_{n = 0}^{\infty} (c_{8,e}(n) -  c_{8,o}(n))q^{n}  = \sum_{n = 0}^{\infty} q^{4n(n + 1)}(q^{8n+8};q^{4})_{\infty}(-q;q^{2})_{2n + 1}$$ which implies
\begin{align*}
\sum_{n = 0}^{\infty} (-1)^{n}(c_{8,e}(n) -  c_{8,o}(n))q^{n} & = \sum_{n = 0}^{\infty} q^{4n(n+1)}(q^{8n+8};q^{4})_{\infty}(q;q^{2})_{2n+1}\\
& = \sum_{n = 0}^{\infty} \dfrac{q^{4n(n+1)}(q^{4};q^{4})_{\infty}(q;q^{2})_{2n+1}}{(q^{4};q^{4})_{2n+1}} \\
& = (q^{4};q^{4})_{\infty} \sum_{n = 0}^{\infty} \dfrac{q^{4n(n+1)}(q;q^{2})_{2n+1}}{(q^{4};q^{4})_{2n+1}}  \\
& = \prod_{n=1}^{\infty} \left(1-q^{12n-1}\right)\left(1-q^{12n-11}\right)\left(1-q^{12n}\right) \,\,\,(\text{by}\,\,\,\, \eqref{eq55}) \\
& = \sum_{n = -\infty}^{\infty} (-1)^{n}q^{6n^{2}+5n}.
\end{align*}
Hence, we have:

\begin{theorem}\label{partt7}
For all $n\geq 0$,
$$ c_{8,e}(n) - c_{8,o}(n)
= \begin{cases}
1, & n = 6j^{2}+ 5j, j \in \mathbb{Z};\\\\
0, & \text{otherwise}.
\end{cases}
$$
\end{theorem}
\begin{example}
Consider $n=39$. 
\end{example}
\noindent The $c_{8}(39)$-partitions are:
$$ (28,3,2^{4}), (9,5,4^{4},2^{4},1), (7,5,4^{4},3,2^{4}).$$
$c_{8,e}(39)$-partitions are: $$ (9,5,4^{4},2^{4},1), (7,5,4^{4},3,2^{4}) $$
and the $c_{8,o}(39)$-partitions are: $$(28,3,2^{4}).$$
Thus, $c_{8,e}(39) - c_{8,o}(39) = 1$. Indeed, this verifies the theorem as $39 = 6(-3)^2 + 5(-3)$ and $39 -3 = 36$. Note that $j = -3$ is the only integer solution to the equation $6j^2 + 5j = 39$.

\end{document}